\newtheorem{theorem}{Theorem}
\newtheorem*{theorem nonum}{Theorem}
\newtheorem{proposition}[theorem]{Proposition}
\newtheorem{corollary}[theorem]{Corollary}
\newtheorem{definition}[theorem]{Definition}
\theoremstyle{remark}
\newtheorem*{example}{Example}
\newtheorem*{remark}{Remark}
\numberwithin{theorem}{section} \numberwithin{equation}{section}
\newcommand{\artin}[2]{\genfrac{(}{)}{}{}{#1}{#2}}
\providecommand{\abs}[1]{\lvert#1\rvert}
\begin{document}
\title{Smallest Irreducible of the Form $x^2-dy^2$}
\author{Shanshan Ding}
\address{2994 Lerner Hall, Columbia University, New York, NY 10027} \email{sd2204@columbia.edu}
\subjclass[2000] {Primary 11R37; Secondary 11R29.}
\keywords{Global function fields; Hilbert class field; Chebotarev density theorem; class numbers.}

\begin{abstract}
It is a classical result that prime numbers of the form $x^2+ny^2$ can be characterized via class field theory for an infinite set of $n$.  In this paper we derive the function field analogue of the classical result.  Then we apply an effective version of the Chebotarev density theorem to bound the degree of the smallest irreducible of the form $x^2-dy^2$, where $x$, $y$, and $d$ are elements of a polynomial ring over a finite field.
\end{abstract}
\maketitle

\section{Introduction and Statement of Results}
Mathematicians since Fermat have studied primes of the form $x^2+ny^2$.  This is a seemingly simple topic that quickly taps into some of the deepest subjects in number theory.  In his book fittingly titled \textit{Primes of the Form} $x^2+ny^2$, David Cox \cite{C} applied extensive concepts from class field theory and complex multiplication to address this topic in the number field setting.  A particularly important result is the following:

\begin{theorem nonum}[Cox 5.26]
Suppose $n > 0$ is a square-free integer such that $n \not\equiv 3 \pmod 4$.  If $p$ is an odd prime not dividing $n$, then $p$ is of the form $x^2+ny^2$ for some $x, y \in \mathbb{Z}$ if and only if the ideal in $\mathbb{Z}$ generated by $p$ splits completely in the Hilbert class field of $\mathbb{Q}(\sqrt{-n})$.
\end{theorem nonum}

This is a special case of a well-known general result in class field theory for Dedekind domains.  In Theorem \ref{SplCon}, we adapt it to function fields to characterize irreducibles that generate ideals of the form $(x^2-dy^2)$, where $x$, $y$, and $d$ are elements of a polynomial ring over a finite field.  The objective of this paper is to bound the degree of the smallest irreducible of the form $x^2-dy^2$ in terms of $\deg d$ and the size of the constant field.  We will accomplish this by applying an effective version of the Chebotarev density theorem to Theorem \ref{SplCon}.  

Throughout this paper, $\mathbb{F}_q$ will denote a finite field of $q$ elements, where $q$ is a power of an odd prime. Let $A=\mathbb{F}_q[T]$ be the polynomial ring over $\mathbb{F}_q$, $F=\mathbb{F}_q(T)$ be the quotient field of $A$, $K=F(\sqrt{d})$ for some $d\in A$ be a quadratic extension of $F$, and $L$ be the Hilbert class field of $K$. We have $F\subset K\subset L$ as a tower of global function fields.  Finally, let $B_K$ and $B_L$ denote the integral closures of $A$ in $K$ and $L$, respectively, and observe that $B_K=A[\sqrt{d}]$ if $d$ is square-free.  

We now state our main result.  Unless otherwise specified, we will work within the setting outlined in the previous paragraph.

\begin{theorem}\label{bound}
Suppose $d\notin \mathbb{F}_q$ is square-free, and suppose further that if $\deg d$ is even, then the leading coefficient of $d$ is a square in $\mathbb{F}^{\times}_q$.  Under these conditions, there is an irreducible $p\in A$ of the form $x^2-dy^2$ for some $x$, $y\in A$ such that 
\begin{equation*}
\deg p \le \left\lceil \frac{4\log (2\hat{r}\left\lceil\frac{\deg d}{2}\right\rceil+2)}{\log q} \right\rceil, 
\end{equation*}
\[
  \ \text{where } \hat{r}=\begin{cases}
   	(\sqrt{q}+1)^{\deg d-1} &\text{ if } \deg d \text{ is odd}\\
   	\frac{2(\sqrt{q}+1)^{\deg d-2}}{\deg d} &\text{ if } \deg d \text{ is even}.
    \end{cases}
\]
\end{theorem}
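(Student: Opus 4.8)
The plan is to turn the problem entirely into a splitting question and then count. By Theorem~\ref{SplCon}, away from the finitely many primes ramifying in $L/F$, the primes dividing $d$, and the infinite place, a monic irreducible $p\in A$ is of the form $x^2-dy^2$ exactly when the prime it generates splits completely in the Hilbert class field $L$. So it suffices to produce a place of $F$ of small degree with this splitting behaviour. Since $L$ is the Hilbert class field of $K$, the extension $L/F$ is Galois; I would first record that it is geometric, i.e.\ that $\mathbb{F}_q$ is the full constant field of $L$, so that the Weil bounds and the Chebotarev estimate apply over $\mathbb{F}_q$. Complete splitting of a place in $L/F$ is precisely the condition that its Artin symbol is the identity conjugacy class $C=\{1\}$, of size $|C|=1$ in $G=\mathrm{Gal}(L/F)$, where $[L:F]=2\,[L:K]$ and $[L:K]=|\mathrm{Pic}(B_K)|$.

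Next I would apply the effective Chebotarev density theorem to the class $C=\{1\}$. Writing $g_L$ for the genus of $L$, the estimate I intend to cite compares the main term $q^{n}/\bigl(n\,[L:F]\bigr)$ against an error governed by the $2g_L$ zeros of the zeta function of $L$ (each of absolute value $q^{1/2}$) together with its two poles, and after discarding the bounded set of excluded primes it guarantees a completely split place of degree $n$ once $q^{n/2}\ge (2g_L+2)^2$, the quantity $2g_L+2$ being the total number of zeros and poles. Solving $q^{n/2}\ge(2g_L+2)^2$ for the least admissible $n$ then yields a completely split prime $p$ with $\deg p\le \bigl\lceil 4\log(2g_L+2)/\log q\bigr\rceil$, the factor $4$ being the reciprocal of the Weil exponent $\tfrac12$ times the squaring of the invariant. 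All that remains is to bound $2g_L+2$ by $2\hat r\lceil\deg d/2\rceil+2$.

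For the final bound I would estimate the two geometric invariants of $L$. First, $K=F(\sqrt d)$ is hyperelliptic with $g_K=\lceil\deg d/2\rceil-1$, and here one separates the odd case (a single ramified infinite place) from the even case (two split infinite places, which is exactly what the hypothesis on the leading coefficient secures). Second, $[L:K]=|\mathrm{Pic}(B_K)|$ is controlled by the analytic class number formula together with Weil's bounds $(\sqrt q-1)^{2g_K}\le h_K\le(\sqrt q+1)^{2g_K}$ for the divisor class number: in the odd case $[L:K]=h_K\le(\sqrt q+1)^{\deg d-1}$, while in the even case one divides $h_K$ by the regulator $R$ and uses $R\ge \deg d/2$ (the fundamental unit $a+b\sqrt d$ has $\deg a\ge\tfrac12\deg d$) to get $[L:K]\le 2(\sqrt q+1)^{\deg d-2}/\deg d$. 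In both cases $[L:K]\le\hat r$. Finally, since $L/K$ is unramified, Riemann--Hurwitz gives $g_L=1+[L:K](g_K-1)$, so that $2g_L+2\le 2\hat r(g_K+1)+2=2\hat r\lceil\deg d/2\rceil+2$; substituting this into the Chebotarev bound produces the asserted inequality.

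The main obstacle is navigating the function-field subtleties rather than any isolated estimate. One must verify that $L/F$ is geometric and identify complete splitting with the trivial Artin symbol; one must establish the regulator lower bound $R\ge\deg d/2$ that creates the $2/\deg d$ factor in the even case, which is exactly where the square-leading-coefficient hypothesis is indispensable, since it forces a genuinely split infinite place and hence the real-quadratic unit structure; and one must confirm that the primes excluded by Theorem~\ref{SplCon}---those ramifying in $L$, those dividing $d$, and the infinite place---cannot absorb the positive count furnished by Chebotarev in the relevant degree range. Reconciling the explicit constants of the cited effective Chebotarev estimate with the clean invariant $2g_L+2$ is the other delicate point.
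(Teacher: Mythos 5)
Your overall strategy mirrors the paper's (reduce to complete splitting via Theorem \ref{SplCon}, apply an effective Chebotarev estimate to $L/F$ with $C=\{1\}$, then bound the invariants by Riemann--Hurwitz and the Weil class number bounds), but the quantitative pivot of your argument is false as stated. You claim an effective Chebotarev input whose error is ``governed by the $2g_L$ zeros and the two poles'' alone, so that a completely split place of degree $n$ is guaranteed once $q^{n/2}\ge (2g_L+2)^2$ --- a threshold depending on $L$ only through its genus. No such statement can hold. Any correct error term must also involve the degree $[L:F]=2r$, equivalently the different: by Riemann--Hurwitz, $\abs{D_{L/F}}=2g_L-2+4r$, which is not controlled by $g_L$ and $q$. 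This is exactly the role of the term $\left(1+\frac{\abs{C}}{n}\right)\abs{D_{L/F}}$ in the Murty--Scherk inequality (\ref{CDT}) that the paper uses, and it is why the paper's threshold $q^{n/2}<\left(2r\left\lceil\frac{\deg d}{2}\right\rceil+2\right)^2$ contains $r$.

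Concretely: take $\deg d=3$, so $g_K=1$ and, since $L/K$ is unramified and geometric, $g_L=1$ no matter how large $r=[L:K]$ is. Here $r=h_K$ is the number of $\mathbb{F}_q$-points on the elliptic curve $w^2=d(T)$, which can be as large as $q+1+\lfloor 2\sqrt{q}\rfloor$. A degree-one place of $F$ splitting completely in $L$ would produce $2r$ distinct degree-one places of $L$; but $L$ has at most $q+1+2g_L\sqrt{q}=q+1+2\sqrt{q}<2r$ such places, so for such $d$ there is no completely split place of degree one at all. Yet your criterion $q^{n/2}\ge(2g_L+2)^2=16$ holds with $n=1$ for every $q\ge 256$, so it would wrongly assert that one exists. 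Your final inequality survives only because you separately bound $2g_L+2\le 2\hat{r}\left\lceil\frac{\deg d}{2}\right\rceil+2$ at the very end, fortuitously re-inserting the $r$-dependence that your threshold was missing; the logic as written does not establish the theorem. The repair is to run the count with the full error of (\ref{CDT}) (the genus term, the $2(2g_F+1)\frac{q^{n/2}}{n}$ term, and the different term $\abs{D_{L/F}}=2r\left\lceil\frac{\deg d}{2}\right\rceil$), absorb the lower-order terms, and solve the resulting quadratic in $q^{n/2}$, which produces the threshold $\left(2r\left\lceil\frac{\deg d}{2}\right\rceil+2\right)^2$ and then the stated bound upon replacing $r$ by $\hat{r}$. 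Your remaining ingredients --- the reduction via Theorem \ref{SplCon}, the geometricity check, $g_K=\left\lceil\frac{\deg d}{2}\right\rceil-1$, the unramifiedness of $L/K$, and the class-number/fundamental-unit bounds giving $r\le\hat{r}$ --- are all correct and agree with the paper.
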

   
Let $h_K$ denote the divisor class number of $K$.  We can derive from Theorem \ref{bound} a non-trivial lower bound for $h_K$. 
\begin{corollary}\label{app}
If $\deg d$ is odd, then
\begin{equation*}
h_K > \frac{q^{\frac{\deg d-1}{4}}-2}{\deg d+1}.
\end{equation*}
\end{corollary}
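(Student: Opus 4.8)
The plan is to sandwich the degree of the irreducible $p = x^2 - dy^2$ produced by Theorem \ref{bound} between an elementary lower bound and the upper bound furnished by that theorem, and then to solve the resulting inequality for $h_K$. The one extra input I would use beyond the \emph{statement} of Theorem \ref{bound} is the observation that its \emph{proof} in fact yields the sharper estimate in which $h_K$ itself appears in place of its Weil upper bound $\hat r$. Indeed, for $\deg d$ odd the model $y^2 = d$ has genus $g = (\deg d - 1)/2$, and $\hat r = (\sqrt{q}+1)^{2g} = (\sqrt{q}+1)^{\deg d -1}$ enters the argument only through the Weil inequality $h_K \le (\sqrt{q}+1)^{2g}$. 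Thus, before that final substitution, the smallest irreducible of the form $x^2-dy^2$ already satisfies $\deg p \le \lceil 4\log(2 h_K \lceil \deg d/2\rceil + 2)/\log q\rceil$.

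First I would establish that every irreducible $p = x^2 - dy^2$ has $\deg p \ge \deg d$ whenever $\deg d$ is odd. Since $p$ is irreducible it cannot be a unit times a square, so $y \neq 0$; then $\deg(dy^2) = \deg d + 2\deg y$ is odd while $\deg(x^2) = 2\deg x$ is even, so the leading terms cannot cancel and $\deg p = \max\{2\deg x,\ \deg d + 2\deg y\} \ge \deg d$. The degenerate case $x = 0$ is covered by reading $\deg 0 = -\infty$. This parity mismatch is precisely where the hypothesis that $\deg d$ be odd is essential.

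Combining the two bounds gives $\deg d \le \lceil 4\log(2 h_K \lceil \deg d/2\rceil + 2)/\log q\rceil$. Because $\lceil x\rceil < x+1$, this yields $\deg d - 1 < 4\log(2 h_K \lceil \deg d/2\rceil + 2)/\log q$, and exponentiating (the $\log q$ in the denominator converting the logarithm to base $q$) gives $q^{(\deg d - 1)/4} < 2 h_K \lceil \deg d/2\rceil + 2$. Finally, for $\deg d$ odd one has $2\lceil \deg d/2\rceil = \deg d + 1$, so rearranging produces $h_K > (q^{(\deg d - 1)/4} - 2)/(\deg d + 1)$, as claimed.

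The hard part will be conceptual rather than computational: recognizing that the useful content is the genuine $h_K$-dependence concealed inside the proof of Theorem \ref{bound}, since feeding the Weil bound $\hat r \ge h_K$ back into the finished statement would only reproduce a tautological inequality and forfeit all control on $h_K$. A secondary point requiring care is the lower bound $\deg p \ge \deg d$, which rests entirely on the degree parity forced by $\deg d$ odd; this is exactly why the corollary is restricted to that case.
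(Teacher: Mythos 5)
Your proposal is correct and follows essentially the same route as the paper: the paper's proof likewise combines the lower bound $\deg p \ge \deg d$ (forced by parity when $\deg d$ is odd) with the intermediate inequality (\ref{main bound}), which is stated in terms of $r$ and equals $h_K$ in the odd-degree case, and then rearranges. Your write-up merely makes explicit two points the paper leaves implicit --- the parity argument for $\deg p \ge \deg d$ and the fact that one must use the $r = h_K$ form of the bound rather than the Weil-bound substitute $\hat r$ appearing in the statement of Theorem \ref{bound}.
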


\section{Preliminaries}
Every finite prime of $F$ is of the form $A_{(p)}$, the discrete valuation ring (DVR) obtained by localizing $A$ at some non-zero prime ideal $(p)$ of $A$.  The \emph{infinite prime} of $F$, $p_\infty$, is the localization of the ring $\mathbb{F}_q[T^{-1}]$ at the prime ideal generated by $T^{-1}$.  Its degree is defined to be 1.  For a quadratic extension $K=F(\sqrt{d})$ of $F$, the ramification behavior of $p_\infty$ in $K$ can be easily determined (see 14.6 of \cite{RB}).  If $\deg d$ is odd, then $p_\infty$ ramifies in $K$.  If $\deg d$ is even, then $p_\infty$ splits completely in $K$ if the leading coefficient of $d$ is a square in $\mathbb{F}^{\times}_q$ and remains a prime in $K$ otherwise.  Let $S_\infty$ be the set of primes in $K$ that lie above $p_\infty$. 

\begin{definition}
The Hilbert class field $L$ of $K$, with respect to $B_K$, is the maximal unramified abelian Galois extension of $K$ in the separable closure of $K$ in which every element of $S_\infty$ splits completely. 
\end{definition} 
\begin{remark}
The extension $L/F$ is Galois (see 2.3 of \cite{RP}).
\end{remark}  

Let $E_1$ and $E_2$ be function fields.  The extension $E_2/E_1$ is \textit{geometric} if $E_1$ and $E_2$ have the same constant field.  It is easy to see that in order for $K/F$ to be a geometric extension, $d$ cannot be a non-square constant.  Assuming that $\mathbb{F}_q$ is indeed the constant field of $K$, then the constant field of $L$ is $\delta$-dimensional over $\mathbb{F}_q$, where $\delta$ is the gcd of the degrees of elements in $S_\infty$ (see 1.3 of \cite{RP}).  Because $\delta=1$ if $p_\infty$ either ramifies or splits completely in $K$ and $\delta=2$ if $p_\infty$ remains a prime in $K$, 
\begin{equation}
L/K \text{ is geometric} \iff p_\infty \text{ ramifies or splits completely in } K.
\label{geo}
\end{equation}
Notice the conditions of Theorem \ref{bound} ensure that $L/F$ is geometric.  

If $\mathfrak{p}$ is a prime ideal of $B_K$ that is unramified in $L$, then there exists a unique $\sigma\in \text{Gal}(L/K)$ such that for all $\alpha\in B_L$, 
\begin{equation}
\sigma(\alpha) \equiv \alpha^{|B_K/\mathfrak{p}|} \!\!\!\pmod{\mathfrak{P}},
\end{equation}
where $\mathfrak{P}$ is a prime ideal in $B_L$ that lies above $\mathfrak{p}$.
The unique $\sigma\in \text{Gal}(L/K)$ is called the \textit{Artin symbol} and denoted by $\artin{L/K}{\mathfrak{P}}$.  Because the Artin symbols of all prime ideals $\mathfrak{P}$ that lie above $\mathfrak{p}$ form a conjugacy class in Gal$(L/K)$, we denote the Artin symbol by $\artin{L/K}{\mathfrak{p}}$ instead to emphasize the underlying prime.  It is a well-known fact that the order of each element in the conjugacy class $\artin{L/K}{\mathfrak{p}}$ is equal to the degree of the extension $B_K/\mathfrak{p} \subset B_L/\mathfrak{P}$ (the proof in the number field case can be found in 5.21 of \cite{C}, and the proof in the function field case is completely analogous).  Consequently, 
\begin{equation}
\mathfrak{p} \textit{\emph{ splits completely in} L} \iff \artin{L/K}{\mathfrak{p}}=1.
\label{ArCon}
\end{equation}
We note here that the Artin symbol is well-defined for any finite Galois extension.  Everything in this paragraph would still hold if we replaced $K$ by $F$.   

Next, let $I_K$ be the set of fractional ideals of $B_K$ and $P_K$ be the set of principal fractional ideals in $I_K$.  The quotient group $I_K/P_K$ is the \textit{ideal class group} of $K$.  It is a standard result that $I_K/P_K$ is finite.  We now state another famous result about the ideal class group, the proof of which can be found in 1.3 of \cite{RP}.  Its corollary follows immediately. 
\begin{theorem}\label{ArIso}
The Artin symbol induces an isomorphism between the ideal class group of $K$ and \emph{Gal}$(L/K)$.
\end{theorem}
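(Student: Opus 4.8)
The plan is to realize the Artin symbol as a genuine group homomorphism, prove it is surjective, and then pin down its kernel; an order count will then force the map to be an isomorphism. Since $L/K$ is the Hilbert class field, it is unramified at every finite prime, so by the discussion above $\artin{L/K}{\mathfrak{p}}$ is defined for every nonzero prime ideal $\mathfrak{p}$ of $B_K$; moreover $L/K$ is abelian, so each Artin symbol is a single well-defined element of $\text{Gal}(L/K)$ rather than merely a conjugacy class. Because $B_K$ is a Dedekind domain, $I_K$ is free abelian on its nonzero primes, so extending multiplicatively yields a homomorphism
\[
\Phi\colon I_K \longrightarrow \text{Gal}(L/K), \qquad \mathfrak{p}\longmapsto \artin{L/K}{\mathfrak{p}}.
\]

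Next I would establish surjectivity. Because $L/K$ is Galois, its decomposition groups generate $\text{Gal}(L/K)$; in the abelian unramified case each decomposition group is cyclic, generated by the Artin symbol of the underlying prime, so the image of $\Phi$ contains a generating set and $\Phi$ is onto. Equivalently, the Chebotarev density theorem invoked later in the paper guarantees that every element of the finite abelian group $\text{Gal}(L/K)$ occurs as $\artin{L/K}{\mathfrak{p}}$ for infinitely many $\mathfrak{p}$.

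The heart of the argument is the identification $\ker \Phi = P_K$. The inclusion $P_K \subseteq \ker \Phi$ is precisely Artin reciprocity for $K$: it is here that the defining properties of the Hilbert class field --- unramified everywhere and totally split along $S_\infty$ --- are essential, since they force $L$ to correspond to the trivial conductor relative to $S_\infty$, so that the reciprocity kernel collapses exactly to the principal ideals $P_K$. Thus $\Phi$ descends to a surjection $\overline{\Phi}\colon I_K/P_K \twoheadrightarrow \text{Gal}(L/K)$. To see that $\overline{\Phi}$ is injective I would invoke the order equality $[L:K]=h_K=\abs{I_K/P_K}$, again a consequence of the fundamental theorems of global class field theory (the $S$-class field theory attached to $S_\infty$); a surjection between finite groups of equal order is an isomorphism, which is the claim.

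The main obstacle is exactly the reciprocity inclusion $P_K\subseteq\ker\Phi$ together with the degree equality $[L:K]=h_K$: these are the genuinely deep inputs, and they cannot be reached by the formal manipulations above. I would therefore quote them from the function-field class field theory in the cited reference, since the remaining work --- multiplicativity of $\Phi$, surjectivity, and the cardinality deduction --- is routine once those two facts are in hand.
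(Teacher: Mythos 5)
The paper offers no internal proof of this theorem: it is stated as a known result with the proof deferred to 1.3 of \cite{RP}, so the real comparison is between your outline and the class-field-theoretic argument in that reference --- and they agree. Your skeleton (extend the Artin symbol multiplicatively to a homomorphism $\Phi\colon I_K \to \text{Gal}(L/K)$, well defined on elements rather than conjugacy classes because $L/K$ is abelian and everywhere unramified; surjectivity via a density/decomposition-group argument; $P_K \subseteq \ker\Phi$ from Artin reciprocity; then the index equality $[L:K] = \abs{I_K/P_K}$ to promote the induced surjection to an isomorphism) is exactly the standard proof, and you are right to flag the reciprocity inclusion and the degree equality as the irreducibly deep inputs that must be quoted: they constitute class field theory for $K$ relative to $S_\infty$, which is precisely why the paper quotes the whole theorem rather than proving it. You also correctly located where the defining condition that $S_\infty$ split completely enters; in the function-field setting this is not a technicality, since constant-field extensions are unramified everywhere, so without the $S_\infty$ condition the maximal unramified abelian extension of $K$ would be infinite and no isomorphism with the finite group $I_K/P_K$ could exist.

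Two cautions. First, notation: you wrote $[L:K] = h_K = \abs{I_K/P_K}$, but in this paper $h_K$ is reserved for the \emph{divisor} class number, while $\abs{I_K/P_K}$ is the \emph{ideal} class number $h_{B_K}$; by (\ref{class number}) these agree only when $p_\infty$ ramifies in $K$, so the equality your argument needs should be written $[L:K] = h_{B_K}$. Second, circularity: the paper itself \emph{derives} $[L:K] = \abs{I_K/P_K}$ as a consequence of Theorem \ref{ArIso}, so you cannot quote that equality from this paper; you must take it from a development of class field theory (such as the cited one) in which it is established independently of the isomorphism being proved --- in the classical structure this is done via the two inequalities and the existence theorem, so your modular plan is sound, but the provenance of that input matters.
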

\begin{corollary}\label{ArSC}
If $\mathfrak{p}$ is a prime ideal in $B_K$, then 
\begin{center}
$\mathfrak{p} \text{ splits completely in L} \iff \mathfrak{p} \text{ is principal}$.
\end{center}
\end{corollary}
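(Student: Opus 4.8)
The plan is to combine the two facts established in the preceding paragraphs: the Artin-symbol characterization of complete splitting in \eqref{ArCon}, and the Artin isomorphism of Theorem \ref{ArIso}. The corollary should fall out as a direct translation between these two descriptions of triviality, so I expect the proof to be very short.

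First I would observe that, since $L$ is the Hilbert class field of $K$, the extension $L/K$ is unramified by definition. Hence every prime ideal $\mathfrak{p}$ of $B_K$ is unramified in $L$, the Artin symbol $\artin{L/K}{\mathfrak{p}}$ is defined, and \eqref{ArCon} applies: $\mathfrak{p}$ splits completely in $L$ if and only if $\artin{L/K}{\mathfrak{p}}=1$. Next I would invoke Theorem \ref{ArIso}. The isomorphism it furnishes is the one \emph{induced} by the Artin symbol, so it carries the ideal class of $\mathfrak{p}$ in $I_K/P_K$ to the element $\artin{L/K}{\mathfrak{p}}$ of $\text{Gal}(L/K)$. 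Being an isomorphism, it sends the identity class, and only the identity class, to the identity automorphism; therefore $\artin{L/K}{\mathfrak{p}}=1$ holds precisely when the class of $\mathfrak{p}$ is trivial, i.e.\ when $\mathfrak{p}\in P_K$. Since a prime ideal belongs to $P_K$ exactly when it is principal, chaining these equivalences yields that $\mathfrak{p}$ splits completely in $L$ if and only if $\mathfrak{p}$ is principal.

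There is no serious obstacle here; the only point requiring a moment of care is confirming that the map of Theorem \ref{ArIso} is genuinely $[\mathfrak{p}]\mapsto\artin{L/K}{\mathfrak{p}}$ rather than some unrelated abstract isomorphism, so that its injectivity can be used to pull triviality back from $\text{Gal}(L/K)$ to the ideal class group. This is precisely the content of the phrase ``induces an isomorphism'' in the statement of Theorem \ref{ArIso}, so the corollary follows at once.
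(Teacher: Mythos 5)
Your proof is correct and is exactly the argument the paper intends: the corollary is stated as an immediate consequence of Theorem \ref{ArIso} combined with the equivalence \eqref{ArCon}, which is precisely the chain of equivalences you give. Your added remark that $L/K$ is unramified (so the Artin symbol is defined for every prime of $B_K$) is a worthwhile point of care that the paper leaves implicit.
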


Observe that since $[L:K]= \abs{\text{Gal}(L/K)}=\abs{I_K/P_K}$, we know $[L:K]$ is finite.  We call $\abs{I_K/P_K}$ the \textit{ideal class number} of $K$ and denote it by $h_{B_K}$, which is not to be confused with the \textit{divisor class number} of $K$, denoted by $h_K$.  The relationship between $h_{B_K}$ and $h_{K}$ (see 14.7 of \cite{RB}) depends on the ramification behavior of $p_\infty$ in $K$ and can be summarized as the following:
\begin{equation}
  \ h_{B_K}=\begin{cases}
    \ h_{K} &\text{ if $p_\infty$ ramifies}\\
   	\ \frac{h_{K}}{\deg g} &\text{ if $p_\infty$ splits completely}\\
   	\ 2h_{K} &\text{ if $p_\infty$ remains a prime},
    \end{cases}
\label{class number}
\end{equation}
where in the second case $g\in A$ and $g+h\sqrt{d}$ for some $h\in A$ is a fundamental unit in $B_K$.  We will use (\ref{class number}) to bound the ideal class number in Section \ref{proof}.

A detailed proof of Cox 5.26 is presented in \cite{C}; here we outline the proof to motivate the function field analogue of the theorem.  Let $\mathcal{O}_K$ denote the ring of algebraic integers in $K=\mathbb{Q}(\sqrt{-n})$.  If $n$ is a positive, square-free integer and $n \not\equiv 3 \pmod 4$, then $\mathcal{O}_K = \mathbb{Z}[\sqrt{-n}]$.  Furthermore, if $p \nmid n$, then $(p)$ is unramified in $K$.  Because $(x^2+ny^2)=(x+y\sqrt{-n})(x-y\sqrt{-n})$ in $\mathcal{O}_K$, 
\begin{equation}
\begin{split}
(p)=(x^2+ny^2)&\iff p\mathcal{O}_K=\mathfrak{p} \bar{\mathfrak{p}}, \mathfrak{p}\not= \bar{\mathfrak{p}}, \text{and } \mathfrak{p} \text{ is principal in }\mathcal{O}_K\\ 
&\iff p\mathcal{O}_K=\mathfrak{p} \bar{\mathfrak{p}}, \mathfrak{p}\not= \bar{\mathfrak{p}}, \text{and } \mathfrak{p} \text{ splits completely in }L\\
&\iff (p) \text{ splits completely in }L,
\end{split}
\label{keyf}
\end{equation}
where $L$ is the Hilbert class field of $\mathbb{Q}(\sqrt{-n})$.  Recall that for a quadratic function field extension $K=F(\sqrt{d})$ of $F$, if $d$ is square-free, then the integral closure of $A$ in $K$ is $A[\sqrt{d}]$.  Note also that if $B_K=A[\sqrt{d}]$ and $p \nmid d$, then the equivalences in (\ref{keyf}) hold in the function field setting.  Thus we state the following theorem without proof, for its proof is completely analogous to that of Cox 5.26.

\begin{theorem}[Analogue of Cox 5.26]\label{SplCon}
Let $d\in A$ be square-free \emph{(}$d\neq 0, 1$\emph{)}.  If $p\in A$ is an irreducible not dividing $d$, then 
\begin{center}
$(p)=(x^2-dy^2) \text{ for some } x, y\in A \iff (p) \text{ splits completely in } L$. 
\end{center}
\end{theorem}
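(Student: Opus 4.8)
The plan is to reproduce, inside $B_K=A[\sqrt{d}]$, the three-step chain of equivalences displayed in (\ref{keyf}) for the number field case. Since $d$ is square-free and $p\nmid d$, the prime $(p)$ is unramified in $K$, so $pB_K$ is either inert (prime) or splits into two distinct conjugate primes. The non-trivial element of $\text{Gal}(K/F)$ sends $\sqrt{d}$ to $-\sqrt{d}$; I will write $\bar\alpha$ for its action on $\alpha\in B_K$ and $\bar{\mathfrak{a}}$ for the induced action on ideals, so that $x^2-dy^2=(x+y\sqrt{d})(x-y\sqrt{d})=N_{K/F}(x+y\sqrt{d})$.

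First I would prove the equivalence of $(p)=(x^2-dy^2)$ in $A$ with the statement that $pB_K=\mathfrak{p}\bar{\mathfrak{p}}$, $\mathfrak{p}\neq\bar{\mathfrak{p}}$, and $\mathfrak{p}$ is principal. For the forward direction, set $\mathfrak{p}=(x+y\sqrt{d})$; since $x+y\sqrt{d}$ is not a unit (its norm $x^2-dy^2$ is a unit multiple of $p$, not a unit in $A$), $\mathfrak{p}$ is a proper nonzero ideal whose relative norm is $(x^2-dy^2)=(p)$. As $(p)$ is prime in $A$, the multiplicativity of the norm forces $\mathfrak{p}$ to be a prime of residue degree $1$ lying above $(p)$. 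If $p$ were inert the only prime above it would be $pB_K$, of residue degree $2$, a contradiction; hence $p$ splits, and because $\text{Gal}(K/F)$ acts transitively on the two split primes, conjugation swaps them, giving $pB_K=\mathfrak{p}\bar{\mathfrak{p}}$ with $\mathfrak{p}\neq\bar{\mathfrak{p}}$ and $\mathfrak{p}=(x+y\sqrt{d})$ principal. Conversely, if $\mathfrak{p}=(\alpha)$ with $\alpha=x+y\sqrt{d}$, then $pB_K=(\alpha)(\bar\alpha)=(x^2-dy^2)B_K$; contracting to $A$ via $B_K=A\oplus A\sqrt{d}$ (so that $aB_K\cap A=(a)$ for every $a\in A$) yields $(p)=(x^2-dy^2)$.

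The second equivalence, that $\mathfrak{p}$ is principal if and only if it splits completely in $L$, is exactly Corollary \ref{ArSC} (itself a consequence of Theorem \ref{ArIso}), so no new work is required. For the third equivalence I would invoke the transitivity of complete splitting in the tower $F\subset K\subset L$: a prime $(p)$ splits completely in $L/F$ if and only if it splits completely in $K/F$ and every prime of $B_K$ above $p$ splits completely in $L/K$. Since $p\nmid d$, the condition ``$(p)$ splits completely in $K/F$'' is precisely ``$pB_K=\mathfrak{p}\bar{\mathfrak{p}}$ with $\mathfrak{p}\neq\bar{\mathfrak{p}}$'', and because $L/F$ is Galois the conjugation automorphism extends to $\text{Gal}(L/F)$ and interchanges $\mathfrak{p}$ and $\bar{\mathfrak{p}}$; thus $\mathfrak{p}$ splits completely in $L/K$ exactly when $\bar{\mathfrak{p}}$ does, and it suffices to impose the splitting condition on $\mathfrak{p}$ alone.

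I expect the main obstacle to be the forward direction of the first equivalence: one must rule out the inert case and establish $\mathfrak{p}\neq\bar{\mathfrak{p}}$ solely from the existence of an element of norm a unit multiple of $p$. The cleanest route is the norm/degree count above, in which a principal ideal of relative norm $(p)$ is forced to be a degree-one prime over $p$, hence $p$ splits. With that settled, the remaining steps are bookkeeping in the tower and a direct appeal to the Artin-symbol machinery already developed, so the argument runs formally parallel to Cox 5.26.
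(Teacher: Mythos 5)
Your proof is correct and follows essentially the same route as the paper: the paper states the theorem without proof as the function-field transcription of Cox 5.26, whose three-step chain of equivalences it records in (\ref{keyf}), and your argument fills in exactly that chain --- the norm computation for the first equivalence, Corollary \ref{ArSC} for the second, and transitivity of complete splitting (with conjugation extended to $\mathrm{Gal}(L/F)$) for the third. The details you supply, ruling out the inert case via the degree-one norm and contracting ideals via $B_K=A\oplus A\sqrt{d}$, are the same ones Cox's proof uses, so there is nothing further to add.
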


\section{Chebotarev Density Theorem and Proof of Theorem \ref{bound}}\label{proof}
Let $p$ be a prime ideal in a number field $E_1$.  The classical Chebotarev density theorem states that for a Galois extension $E_2/E_1$, if $C$ is a conjugacy class in $G=\text{Gal}(E_2/E_1)$, then the Dirichlet density of the set $\{p\subset E_1 \mid p \text{ unramified in } E_2, \artin{E_2/E_1}{p}=C\} \text{ is } \frac{\abs{C}}{\abs{G}}$.  One could use this result to approximate the number of unramified primes of a given degree whose Artin symbols are in the same conjugacy class.  Effective versions of the Chebotarev density theorem bound the error term of this approximation, which in the number field case was addressed by Lagarias and Odlyzko \cite{LO}.  Murty and Scherk \cite{MS} provided analogues of their results for function fields, where the Riemann hypothesis is known to be true.

Suppose $E_1$ and $E_2$ are function fields.  Let $\mathbb{F}$ be the constant field of $E_1$, $\bar{\mathbb{F}}$ be the algebraic closure of $\mathbb{F}$ in $E_2$, and let $m$ denote $[\bar{\mathbb{F}}:\mathbb{F}]$.  Define 
\begin{equation*}
\begin{split}
&\text{$\pi(n):=\# \{ p\subset E_1\mid p \text{ unramified in $E_2$, } \deg p=n\}$ and} \\
&\text{$\pi_C(n):=\# \{ p\subset E_1\mid p \text{ unramified in $E_2$, }  \deg p=n, \artin{E_2/E_1}{p}=C\}$.}
\end{split}
\end{equation*}
Murty and Scherk showed that
\begin{equation}
\left\lvert \pi_C(n) - m \frac{\abs{C}}{\abs{G}} \pi(n) \right\rvert \le 2g_{E_2} \frac{\abs{C}}{\abs{G}} \frac{q^{n/2}}{n} + 2(2g_{E_1}+1) \abs{C} \frac{q^{n/2}}{n} + \left(1+ \frac{\abs{C}}{n}\right) \abs{D_{E_2/E_1}},
\label{CDT} 
\end{equation}
where $g_{E_i}$ is the genus of $E_i$, $q= \abs{\mathbb{F}}$, and $\abs{D_{E_2/E_1}}= \!\!\!\sum\limits_{\substack{p \subset E_1\\ \text{ram. in } E_2}}\!\!\!\!\deg p$ is the degree of the different of $E_2$ over $E_1$.  Note here that $\pi_C(n)$ is certain to be positive as soon as $m \frac{\abs{C}}{\abs{G}} \pi(n)>$ RHS of (\ref{CDT}).

Now let $E_1=F$, $E_2=L$, and $C=1$.  By (\ref{ArCon}) and Theorem \ref{SplCon}, if $d$ is square-free and $p$ is an irreducible in $A$ not dividing $d$, then 
\begin{equation}
(p)=(x^2-dy^2) \iff \artin{L/F}{(p)} = C.
\end{equation}
Since DVRs that arise from non-zero prime ideals of $A$ exhaust the finite primes of $F$, $\pi_C(n)$ represents the number of prime ideals of degree $n$ ($n>0$) in $A$ of the form $(x^2-dy^2)$, plus possibly 1 for the infinite prime $p_\infty$ if $n=1$.  Thus the smallest positive integer $n$ such that $m \frac{\abs{C}}{\abs{G}} \pi(n)> \text{RHS of (\ref{CDT})}+1$ is an upper bound for the degree of the smallest $(p)$ in $A$ of the form $(x^2-dy^2)$.  Because we can always multiply $p$ by the unit $\frac{x^2-dy^2}{p}$, this is equivalent to an upper bound for the degree of the smallest irreducible in $A$ of the form $x^2-dy^2$.  

Our goal therefore is to find an upper bound for the first $n$ such that $m \frac{\abs{C}}{\abs{G}} \pi(n)> \text{RHS of (\ref{CDT})}+1$ in terms of $\deg d$ and $q$.  To do this, we must first decipher the terms that appear in (\ref{CDT}).  Clearly, $\abs{C}=1$ and $\abs{G}=2r$, where $r=[L:K]$ will be addressed later.  It is a well-known fact (see p.49 of \cite{RB}) that the genus of $F$ is 0.  To find $g_L$ and $\abs{D_{E_2/E_1}}$ we resort to the Riemann-Hurwitz theorem for function fields (see 7.16 of \cite{RB}), which states that if $E_2/E_1$ is a finite, separable, geometric extension, then
\begin{equation}
2g_{E_2}-2=[E_2:E_1](2g_{E_1}-2)+\abs{D_{E_2/E_1}}.
\label{RH}
\end{equation} 
Because the condition of the Riemann-Hurwitz theorem requires $F\subset K\subset L$ to be geometric extensions, we require that $d\notin \mathbb{F}_q$, and if $\deg d$ is even, we also require the leading coefficient of $d$ to be a square in $\mathbb{F}^{\times}_q$.  

To find $g_L$, we will apply (\ref{RH}) twice, first to $K/F$ to solve for $g_K$, then to $L/K$.  Having solved for $g_L$, we can then apply (\ref{RH}) once more to $L/F$ to find $\abs{D_{L/F}}$.  The following proposition will help us with the first step.

\begin{proposition}
If $d$ satsifies the hypotheses of Theorem \ref{bound}, then
\[
  \ \abs{D_{K/F}}=\deg d + \begin{cases}
		1 &\text{if $\deg d$ is odd}\\
		0 &\text{if $\deg d$ is even}. 
    \end{cases}
\]
\end{proposition}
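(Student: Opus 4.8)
The plan is to read off both sides of the asserted identity from the ramification data of the primes of $F$ in $K=F(\sqrt{d})$, using the paper's own definition $\abs{D_{K/F}}=\sum_{p\,\text{ram. in }K}\deg p$. This definition is the correct degree of the different in the present situation precisely because $q$ is odd and $[K:F]=2$: every ramified prime is then tamely ramified with ramification index $e=2$, so $e\nmid f g$ forces a single prime $\mathfrak{P}$ above it with $f=1$ and different exponent $e-1=1$, whence its contribution to the degree of the different is $1\cdot\deg\mathfrak{P}=\deg p$. Thus the whole proposition reduces to identifying which primes of $F$ ramify in $K$ and summing their degrees, split into the finite primes and the infinite prime $p_\infty$.

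First I would dispose of the finite primes, which account for the $\deg d$ term common to both cases. Since $d$ is square-free and $q$ is odd, a standard discriminant (Kummer) computation shows that a finite prime $(p)$ ramifies in $K=F(\sqrt{d})$ if and only if $p\mid d$. Writing $d=c\prod_i p_i$ with $c\in\mathbb{F}^{\times}_q$ its leading coefficient and the $p_i$ its distinct monic irreducible factors, the ramified finite primes are exactly the $(p_i)$, and $\sum_i\deg p_i=\deg\left(\prod_i p_i\right)=\deg d$. Hence the finite primes contribute exactly $\deg d$ to $\abs{D_{K/F}}$, regardless of the parity of $\deg d$.

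Next I would handle $p_\infty$, which is where the case distinction enters. Here I invoke the ramification behavior recalled in Section~2 (14.6 of \cite{RB}): if $\deg d$ is odd then $p_\infty$ ramifies in $K$, contributing its degree $\deg p_\infty=1$; if $\deg d$ is even then, under the hypotheses of Theorem~\ref{bound} (square leading coefficient), $p_\infty$ splits completely in $K$ and is in particular unramified, contributing $0$. Adding the infinite contribution to the finite contribution yields $\abs{D_{K/F}}=\deg d+1$ when $\deg d$ is odd and $\abs{D_{K/F}}=\deg d$ when $\deg d$ is even, which is exactly the stated formula.

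The argument is essentially bookkeeping once the ramification data is assembled, so I do not anticipate a serious obstacle; the one point demanding care is the contribution of the infinite prime. Specifically one must keep in mind the convention $\deg p_\infty=1$, and observe that although the square-leading-coefficient hypothesis governs whether $p_\infty$ splits or stays inert in the even case, in \emph{both} even subcases $p_\infty$ is unramified, so only the parity of $\deg d$ affects $\abs{D_{K/F}}$.
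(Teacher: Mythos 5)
Your proof is correct and takes essentially the same approach as the paper: both split $\abs{D_{K/F}}$ into the finite primes, which are exactly the primes dividing $d$ and hence contribute $\deg d$ (the paper establishes this by explicitly factoring $(d)=(\sqrt{d_1})^2\cdots(\sqrt{d_l})^2$ in $K$, you by citing the standard discriminant/Kummer criterion), plus the infinite prime, which contributes $1$ precisely when $\deg d$ is odd. Your added justification that the sum-of-degrees formula genuinely is the degree of the different (tame ramification, exponent $e-1=1$, so each ramified $p$ contributes $\deg p$) is a point the paper simply builds into its stated formula for $\abs{D_{E_2/E_1}}$, so it is a welcome but inessential refinement.
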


\begin{proof}
Given that $\abs{D_{K/F}}= \!\!\!\sum\limits_{\substack{p\subset F\\ \text{ram. in } K}}\!\!\!\!\deg p$, let $d=ud_1d_2\cdots d_l$ be the factorization of $d$ into unit and monic irreducibles in $A$, then $(d)=(d_1)(d_2)\cdots (d_l)=(\sqrt d)^2$ as ideals in $F(\sqrt{d})=K$.  Since $d_1$, $d_2$, \ldots, $d_l$ are pairwise coprime, the complete factorization of $(d)$ in $K$ must be $(\sqrt{d_1})^2 (\sqrt{d_2})^2 \cdots (\sqrt{d_l})^2$, so the finite primes of $F$ that ramify in $K$ are precisely the DVRs that arise from $(d_1)$, $(d_2)$, \ldots, $(d_l)$.  Thus $\abs{D_{K/F}}= \sum\limits_{d_i} \deg d_i=\deg d$, plus 1 if $p_\infty$ ramifies in $K$, i.e. if $\deg d$ is odd.     
\end{proof}

Since $\abs{D_{L/K}}=0$ by definition of the Hilbert class field, we compute from the Riemann-Hurwitz theorem that
\begin{equation}
\begin{split}
&g_K=\left\lceil\frac{\deg d}{2}\right\rceil-1 \text{, }\\ &g_L=r\left(\left\lceil\frac{\deg d}{2}\right\rceil-2\right)+1 \text{, and }\\
&\abs{D_{L/F}}=2r\left\lceil\frac{\deg d}{2}\right\rceil.
\end{split}
\end{equation}

Next we deal with the term $m \frac{\abs{C}}{\abs{G}} \pi(n)$.  Since $L/F$ is geometric, $m=1$.  
\begin{proposition}
As it appears in \emph{(}\ref{CDT}\emph{)}, 
\begin{equation*}
\pi(n) \ge \frac{q^n}{n} - \frac{q^{n/2}}{n} - q^{n/3} - \frac{2r\left\lceil\frac{\deg d}{2}\right\rceil}{n}.
\end{equation*}
\end{proposition}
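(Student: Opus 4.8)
The plan is to compare $\pi(n)$ with the total number of monic irreducible polynomials of degree $n$ in $A$, which I will denote $N(n)$, and then account separately for the ramified primes. Since the finite primes of $F$ of degree $n$ correspond bijectively to the monic irreducibles of degree $n$ in $A$, the number of primes of $F$ of degree $n$ is at least $N(n)$ (there is an extra contribution from $p_\infty$ only when $n=1$, which can be discarded for a lower bound). Deleting the ramified primes then gives $\pi(n) \ge N(n) - R(n)$, where $R(n)$ is the number of primes of $F$ of degree $n$ that ramify in $L$.

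First I would bound $R(n)$. Each prime $p$ of $F$ of degree $n$ that ramifies in $L$ contributes at least $\deg p = n$ to $\abs{D_{L/F}}$, so $R(n) \le \abs{D_{L/F}}/n$. Feeding in the value $\abs{D_{L/F}} = 2r\left\lceil \frac{\deg d}{2}\right\rceil$ computed just above yields $R(n) \le \frac{2r\left\lceil \deg d/2\right\rceil}{n}$, which is exactly the last term of the claimed bound.

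Next I would estimate $N(n)$ from below via the classical Gauss formula $n\,N(n) = \sum_{e \mid n} \mu(e)\, q^{n/e}$. Isolating the two leading terms gives $n\,N(n) = q^n - \varepsilon\, q^{n/2} + \sum_{e \mid n,\, e \ge 3} \mu(e)\, q^{n/e}$, where $\varepsilon \in \{0,1\}$ according to whether $2 \mid n$. For a lower bound I would discard the signs of $\mu$ in the tail, using $\left| \sum_{e \mid n,\, e \ge 3} \mu(e)\, q^{n/e} \right| \le \sum_{e \mid n,\, e \ge 3} q^{n/e}$. Since every exponent $n/e$ with $e \ge 3$ satisfies $n/e \le n/3$ and the number of divisors of $n$ is at most $n$, this tail is at most $n\, q^{n/3}$. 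Inserting $-q^{n/2}$ for free when $2 \nmid n$, I obtain in all cases $n\,N(n) \ge q^n - q^{n/2} - n\, q^{n/3}$, i.e. $N(n) \ge \frac{q^n}{n} - \frac{q^{n/2}}{n} - q^{n/3}$.

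Combining the two estimates gives the proposition. The only real subtlety is the tail bound: one must notice that the single $q^{n/2}$ term retains its factor $1/n$, whereas the grouped lower-order terms, numbering at most $n$, absorb that factor and leave a clean $q^{n/3}$. I would also verify that each ramified prime genuinely contributes at least its degree to the different, which holds here because all ramification in $L/F$ is tame (the relevant ramification index is $2$, coprime to the odd characteristic), so every different exponent is at least $1$.
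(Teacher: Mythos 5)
Your proposal is correct and takes essentially the same route as the paper: decompose $\pi(n)$ as (monic irreducibles of degree $n$) minus (ramified primes of degree $n$), bound the ramified count by $\abs{D_{L/F}}/n = \frac{2r\left\lceil\deg d/2\right\rceil}{n}$, and lower-bound the irreducible count by $\frac{q^n}{n} - \frac{q^{n/2}}{n} - q^{n/3}$. The only difference is that the paper cites this last estimate directly (Rosen, p.~14), whereas you rederive it from the Gauss--M\"obius formula; your derivation (and your extra tameness remark on the different) is sound but not needed.
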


\begin{proof}
Define  $\gamma_n$ to be the number of monic irreducibles in $A$ and $\varepsilon_n$ the number of primes in $F$ that ramify in $L$, both of degree $n$.  Observe that $\pi(n)=\gamma_n - \varepsilon_n$, plus 1 if $p_\infty$ ramifies.  It is well-known (see p.14 of \cite{RB}) that $\abs{\gamma_n - \frac{q^n}{n}} \le \frac{q^{n/2}}{n} + q^{n/3}$, so $\gamma_n \ge \frac{q^n}{n} - \frac{q^{n/2}}{n} - q^{n/3}$.  Since $\abs{D_{L/F}}= \!\!\!\sum\limits_{\substack{p\subset F\\ \text{ram. in } L}}\!\!\!\!\deg p$, $\varepsilon_n \le \frac{\abs{D_{L/F}}} {n} = \frac{2r\left\lceil\frac{\deg d}{2}\right\rceil}{n}$.  
\end{proof}

We are now ready to bound the smallest positive integer $n$ such that $m \frac{\abs{C}}{\abs{G}} \pi(n)> \text{RHS of (\ref{CDT})}+1$ in terms of $\deg d$, $q$, and $r$. 

\begin{proof}[Proof of Theorem \ref{bound}]
After substituting and collecting terms, we need to bound the smallest $n$ such that 
\begin{equation}
\begin{split}
q^n-\left(3+2r\left\lceil\frac{\deg d}{2}\right\rceil\right)q^{n/2} -nq^{n/3} &-\left(4r^2\left\lceil\frac{\deg d}{2}\right\rceil + 2r\right)n \\ &-\left(4r^2\left\lceil\frac{\deg d}{2}\right\rceil + 2r\left\lceil\frac{\deg d}{2}\right\rceil\right) > 0.
\end{split}
\end{equation} 
Note that $q^{n/2}>n$ for all ($q, n$) and $2q^{n/2}\geq nq^{n/3}$ for all ($q, n$)$\neq$($3, 5$).  If ($q, n$)$=$($3, 5$), then $nq^{n/3}-2q^{n/2}<(q^{n/2}-n)\left(4r^2\left\lceil\frac{\deg d}{2}\right\rceil + 2r\right)$.  These observations show that we can reasonably bound $n$ by solving for it in the equation
\begin{equation}
q^n-\left(4r^2\left\lceil\frac{\deg d}{2}\right\rceil + 2r\left\lceil\frac{\deg d}{2}\right\rceil + 2r + 5\right)q^{n/2} - \left(4r^2\left\lceil\frac{\deg d}{2}\right\rceil + 2r\left\lceil\frac{\deg d}{2}\right\rceil\right)=0,
\end{equation}
to which we can apply the quadratic formula and conclude that the smallest $n$ in question satisfies 
\begin{equation}
q^{n/2}< \left(2r\left\lceil\frac{\deg d}{2}\right\rceil+2\right)^2.
\end{equation}
Since $n$ is a positive integer, 
\begin{equation}
n \le \left\lceil \frac{4\log (2r\left\lceil\frac{\deg d}{2}\right\rceil+2)}{\log q} \right\rceil.
\label{main bound}
\end{equation}

All that remains is finding an upper bound for $r$ in terms of $\deg d$ and $q$, which we denote by $\hat{r}$ to emphasize the fact that $[L:K]$ itself may be much smaller.  In particular, if the infinite prime of $F$ splits completely in $K$, it could very well be the case that $[L:K]=1$.   

By (\ref{class number}), if $\deg d$ is odd, then $r=h_K$, where $h_K$ is the divisor class number of $K$.  Since we have excluded the possibility that $p_\infty$ remains a prime in $K$, if $\deg d$ is even, then $r=\frac{h_K}{\deg g}$, where $g+h\sqrt{d}$ is a fundamental unit in $B_K$.  Observe that $g^2-dh^2\in \mathbb{F}^{\times}_q$, so $\deg g\ge \frac{\deg d}{2}$, and consequently $r\le \frac{2h_K}{\deg d}$ if $\deg d$ is even.  A well-known bound on $h_K$ (see 5.11 of \cite{RB}) is $(\sqrt{q}-1)^{2g_K} \le h_K \le (\sqrt{q}+1)^{2g_K}$, hence
\begin{equation}
\ r\le \begin{cases}
		(\sqrt{q}+1)^{\deg d-1} &\text{ if } \deg d \text{ is odd}\\
   	\frac{2(\sqrt{q}+1)^{\deg d-2}}{\deg d} &\text{ if } \deg d \text{ is even},   
   	\end{cases}
\end{equation}
and we are done.
\end{proof}

\begin{remark}
Given large enough $q$, Theorem \ref{bound} suggests we can expect the degree of the smallest irreducible polynomial of the form $x^2-dy^2$ to be bounded by roughly $2\deg d$.
\end{remark}

\begin{proof}[Proof of Corollary \ref{app}]
If $\deg d$ is odd, then the degree of the smallest irreducible of the form $x^2-dy^2$ must be at least $\deg d$, thus we obtain Corollary \ref{app} by rearranging the terms in (\ref{main bound}).
\end{proof}
\begin{remark}
If $q$ is small, Corollary \ref{app} actually gives a better lower bound on $h_K$ than $(\sqrt{q}-1)^{\deg d-1}$ does for large degrees of $d$.  For $q=3$, this happens if $\deg d \ge 11$, and for $q=5$, if $\deg d \ge 17$.
\end{remark}

\begin{example}
Let $q=5$ and $d=T^{19} + 3T^8 + 2$.  The smallest irreducible in $\mathbb{F}_5[T]$ of the form $x^2-dy^2$ is 
\begin{equation}
(T+2)^2-(T^{19} + 3T^8 + 2)=4T^{19} + 2T^8 + T^2 + 4T + 2.
\end{equation}  
Its degree is 19, which is less than our upper bound of 60 by Theorem \ref{bound}.  Furthermore, we computed in Magma that the class number of $\mathbb{F}_5(T, \sqrt{d})$ is 1348408, which is larger than our lower bound of 70 by Corollary \ref{app}.  In comparison, the lower bound for $h_K$ given by $(\sqrt{q}-1)^{\deg d-1}$ is 46.
\end{example} 

\section*{Acknowledgements}
The author would like to thank Jeremy Rouse for his unfailing guidance throughout every stage of this project, Ken Ono for his insight and support, and the referee for numerous helpful comments.  This research was generously funded through the NSF-REU program.


\begin{thebibliography}{99}

\bibitem{C}  D. A. Cox, \emph{Primes of the Form $x^2+ny^2$}, Wiley, New York, 1989. 
\bibitem{LO} J. C. Lagarias and A. M. Odlyzko, \emph{Effective versions of the Chebotarev density theorem}, in \emph{Algebraic Number Fields}, Academic Press, London, 1977, pp. 409-464.
\bibitem{MS} K. Murty and J. Scherk, \emph{Effective versions of the Chebotarev density theorem for function fields}, C. R. Acad. Sci. (Paris), 319 (1994), 523-528.
\bibitem{RP} M. Rosen, \emph{The Hilbert class field in function fields}, Expo. Math., 5 (1987), 365-378.
\bibitem{RB} M. Rosen, \emph{Number Theory in Function Fields}, Springer-Verlag, New York, 2002.

\end{thebibliography}
\end{document}